\def\lijntje{\vrule height2.4pt depth-2pt width0.5in}
\def\vlijntje{\vrule height0.45in depth0.4pt width0.4pt}
\def\vlijn{\buildrel {\hbox to 0pt{\hss$\textstyle\circ$\hss}}\over\vlijntje}
\def\dlijntje{{\vrule height2pt depth-1.6pt
width0.5in}\llap{\vrule height4pt depth-3.6pt width0.5in}}
\def\tlijntje{{\vrule height1.7pt depth-1.3pt
width0.5in}\llap{\vrule height3.0pt depth-2.6pt width0.5in}\llap{\vrule height4.3pt depth-3.9pt width0.5in}
}
\def\vtriple#1\over#2\over#3{\mathrel{\mathop{\kern0pt #2}\limits_{\hbox
to 0pt{\hss$#1$\hss}}^{\hbox to 0pt{\hss$#3$\hss}}}}
\def\rvtriple#1\over#2\over#3{\mathrel{\mathop{\kern0pt #2}\limits_{\hbox
to 0pt{\hss$#3$\hss}}^{\hbox to 0pt{\hss$#1$\hss}}}}
\def\Ct{\vtriple{\scriptstyle 2}\over\circ\over{}
\kern-1pt\lijntje\kern-1pt\vtriple{\scriptstyle 1}\over\circ\over{}
\kern-4pt{\dlijntje \kern -25pt<}\kern8pt
\vtriple{\scriptstyle 0}\over\circ\over{}\kern-1pt
}
\def\Bt{\vtriple{\scriptstyle 2}\over\circ\over{}
\kern-1pt\lijntje\kern-1pt\vtriple{\scriptstyle 1}\over\circ\over{}
\kern-4pt{\dlijntje \kern -25pt>}\kern8pt
\vtriple{\scriptstyle 0}\over\circ\over{}\kern-1pt}
\def\ddA{{\rm A}}
\def\det{{\rm det}}
\newcommand{\C}{\mathbb C}
\newcommand{\N}{\mathbb N}
\newcommand{\R}{\mathbb R}
\newcommand{\Z}{\mathbb Z}
\def\Dm{\vtriple{\scriptstyle n+1}\over\circ\over{}\kern-1pt\lijntje\kern-1pt
\vtriple{\scriptstyle{n}}\over\circ\over{}
\cdots\cdots\vtriple{\scriptstyle 4}\over\circ\over{}\kern-1pt\lijntje\kern-1pt
\vtriple{\scriptstyle 3}\over\circ\over{\buildrel
{\scriptstyle 2}\over\vlijn}\kern-1pt\lijntje\kern-1pt
\vtriple{1}\over\circ\over{}\kern-1pt}
\def\Dn{\vtriple{\scriptstyle n}\over\circ\over{}\kern-1pt\lijntje\kern-1pt
\vtriple{\scriptstyle{n-1}}\over\circ\over{}
\cdots\cdots\vtriple{\scriptstyle 4}\over\circ\over{}\kern-1pt\lijntje\kern-1pt
\vtriple{\scriptstyle 3}\over\circ\over{\buildrel
{\scriptstyle 2}\over\vlijn}\kern-1pt\lijntje\kern-1pt
\vtriple{1}\over\circ\over{}\kern-1pt}
\def\En{\vtriple{\scriptstyle n}\over\circ\over{}\kern-1pt\lijntje\kern-1pt
\vtriple{\scriptstyle{n-1}}\over\circ\over{}
\cdots\cdots\vtriple{\scriptstyle 5}\over\circ\over{}\kern-1pt\lijntje\kern-1pt
\vtriple{\scriptstyle 4}\over\circ\over{\buildrel
{\scriptstyle 2}\over\vlijn}\kern-1pt\lijntje\kern-1pt
\vtriple{\scriptstyle 3}\over\circ\over{}\kern-1pt\lijntje\kern-1pt
\vtriple{\scriptstyle 1}\over\circ\over{}\kern-1pt}
\def\An{\vtriple{\scriptstyle n}\over\circ\over{}\kern-1pt\lijntje\kern-1pt
\vtriple{\scriptstyle{n-1}}\over\circ\over{}\kern-1pt\lijntje\kern-1pt
\vtriple{\scriptstyle n-2}\over\circ\over{}
\cdots\cdots
\vtriple{\scriptstyle 2}\over\circ\over{}\kern-1pt\lijntje\kern-1pt
\vtriple{\scriptstyle 1}\over\circ\over{}\kern-1pt}
\def\Cn{\vtriple{\scriptstyle n-1}\over\circ\over{}
\kern-1pt\lijntje\kern-1pt\vtriple{\scriptstyle{n-2}}\over\circ\over{}
\cdots\cdots
\vtriple{\scriptstyle 2}\over\circ\over{}
\kern-1pt\lijntje\kern-1pt\vtriple{\scriptstyle 1}\over\circ\over{}
\kern-4pt{\dlijntje \kern -25pt<}\kern10pt
\vtriple{\scriptstyle 0}\over\circ\over{}\kern-1pt}
\def\Ct{\vtriple{\scriptstyle 2}\over\circ\over{}
\kern-1pt\lijntje\kern-1pt\vtriple{\scriptstyle 1}\over\circ\over{}
\kern-4pt{\dlijntje \kern -25pt<}\kern12pt
\vtriple{\scriptstyle 0}\over\circ\over{}\kern-1pt
}
\def\Bn{\vtriple{\scriptstyle n-1}\over\circ\over{}
\kern-1pt\lijntje\kern-1pt\vtriple{\scriptstyle{n-2}}\over\circ\over{}
\cdots\cdots
\vtriple{\scriptstyle 2}\over\circ\over{}
\kern-1pt\lijntje\kern-1pt\vtriple{\scriptstyle 1}\over\circ\over{}
\kern-4pt{\dlijntje \kern -25pt>}\kern10pt
\vtriple{\scriptstyle 0}\over\circ\over{}\kern-1pt}
\def\Bt{\vtriple{\scriptstyle 2}\over\circ\over{}
\kern-1pt\lijntje\kern-1pt\vtriple{\scriptstyle 1}\over\circ\over{}
\kern-4pt{\dlijntje \kern -25pt>}\kern12pt
\vtriple{\scriptstyle 0}\over\circ\over{}\kern-1pt}
\def\Es{\vtriple{\scriptstyle 6}\over\circ\over{}\kern-1pt\lijntje\kern-1pt
\vtriple{\scriptstyle 5}\over\circ\over{}\kern-1pt\lijntje\kern-1pt
\vtriple{\scriptstyle 4}\over\circ\over{\buildrel
{\scriptstyle 2}\over\vlijn}\kern-1pt\lijntje\kern-1pt
\vtriple{3}\over\circ\over{}\kern-1pt\lijntje\kern-1pt
\vtriple{\scriptstyle 1}\over\circ\over{}\kern-1pt}
\def\Ff{
\vtriple{\scriptstyle 1}\over\circ\over{}
\kern-1pt\lijntje\kern-1pt\vtriple{\scriptstyle 2}\over\circ\over{}
\kern-4pt{\dlijntje \kern -25pt<}\kern10pt
\vtriple{\scriptstyle 3}\over\circ\over{}\kern-1pt\lijntje\kern-1pt
\vtriple{\scriptstyle 4}\over\circ\over{}
\kern-1pt}
\def\Ht{
\vtriple{\scriptstyle 1}\over\circ\over{}
\kern-1pt\overset{5}{\lijntje}\kern-1pt\vtriple{\scriptstyle 2}\over\circ\over{}
\kern-1pt\lijntje\kern-1pt
\vtriple{\scriptstyle 3}\over\circ\over{}\kern-1pt}
\def\Hf{
\vtriple{\scriptstyle 1}\over\circ\over{}
\kern-1pt\overset{5}{\lijntje}\kern-1pt\vtriple{\scriptstyle 2}\over\circ\over{}
\kern-1pt\lijntje\kern-1pt
\vtriple{\scriptstyle 3}\over\circ\over{}\kern-1pt\lijntje\kern-1pt
\vtriple{\scriptstyle 4}\over\circ\over{}
\kern-1pt}
\def\In{
\vtriple{\scriptstyle 0}\over\circ\over{}
\kern-1pt\overset{n}{\lijntje}\kern-1pt\vtriple{\scriptstyle 1}\over\circ\over{}
\kern-1pt}
\def\Gt{
\vtriple{\scriptstyle 0}\over\circ\over{}
\kern-4pt{\tlijntje\kern -25pt<}\kern 10pt\vtriple{\scriptstyle 1}\over\circ\over{}
\kern-1pt}
\def\EBn{\vtriple{\scriptstyle n-1}\over\circ\over{}
\kern-1pt\lijntje\kern-1pt\vtriple{\scriptstyle{n-2}}\over\circ\over{\buildrel
{\scriptstyle -1}\over\vlijn}\cdots\cdots
\vtriple{\scriptstyle 2}\over\circ\over{}
\kern-1pt\lijntje\kern-1pt\vtriple{\scriptstyle 1}\over\circ\over{}
\kern-4pt{\dlijntje \kern -25pt<}\kern8pt
\vtriple{\scriptstyle 0}\over\circ\over{}\kern-1pt}
\def\Cn{\vtriple{\scriptstyle n-1}\over\circ\over{}
\kern-1pt\lijntje\kern-1pt\vtriple{\scriptstyle{n-2}}\over\circ\over{}
\cdots\cdots
\vtriple{\scriptstyle 2}\over\circ\over{}
\kern-1pt\lijntje\kern-1pt\vtriple{\scriptstyle 1}\over\circ\over{}
\kern-4pt{\dlijntje \kern -25pt<}\kern10pt
\vtriple{\scriptstyle 0}\over\circ\over{}\kern-1pt}
\def\ECn{\vtriple{\scriptstyle -2}\over\circ\over{}
\kern-4pt{\dlijntje \kern -25pt>}\kern8pt\vtriple{\scriptstyle n-1}\over\circ\over{}
\kern-1pt\lijntje\kern-1pt\vtriple{\scriptstyle{n-2}}\over\circ\over{}
\cdots\cdots
\vtriple{\scriptstyle 2}\over\circ\over{}
\kern-1pt\lijntje\kern-1pt\vtriple{\scriptstyle 1}\over\circ\over{}
\kern-4pt{\dlijntje \kern -25pt<}\kern12pt
\vtriple{\scriptstyle 0}\over\circ\over{}\kern-1pt}
\def\Fo{\vtriple{\scriptstyle -1}\over\circ\over{}
\kern-1pt\lijntje\kern-1pt
\vtriple{\scriptstyle 1}\over\circ\over{}
\kern-1pt\lijntje\kern-1pt\vtriple{\scriptstyle 2}\over\circ\over{}
\kern-4pt{\dlijntje \kern -25pt<}\kern8pt
\vtriple{\scriptstyle 3}\over\circ\over{}\kern-1pt\lijntje\kern-1pt
\vtriple{\scriptstyle 4}\over\circ\over{}
\kern-1pt}
\def\Ft{
\vtriple{\scriptstyle 1}\over\circ\over{}
\kern-1pt\lijntje\kern-1pt\vtriple{\scriptstyle 2}\over\circ\over{}
\kern-4pt{\dlijntje \kern -25pt<}\kern8pt
\vtriple{\scriptstyle 3}\over\circ\over{}\kern-1pt\lijntje\kern-1pt
\vtriple{\scriptstyle 4}\over\circ\over{}
\kern-1pt\lijntje\kern-1pt
\vtriple{\scriptstyle -2}\over\circ\over{}
\kern-1pt}
\def\Go{\vtriple{\scriptstyle -1}\over\circ\over{}
\kern-1pt\lijntje\kern-1pt
\vtriple{\scriptstyle 0}\over\circ\over{}
\kern-4pt{\tlijntje\kern -25pt<}\kern 12pt\vtriple{\scriptstyle 1}\over\circ\over{}
\kern-1pt}
\def\Gf{
\vtriple{\scriptstyle 0}\over\circ\over{}
\kern-4pt{\tlijntje\kern -25pt<}\kern 12pt\vtriple{\scriptstyle 1}\over\circ\over{}
\kern-1pt\lijntje\kern-1pt
\vtriple{\scriptstyle -2}\over\circ\over{}
\kern-1pt}
\numberwithin{equation}{section}
\newtheorem{lemma}{Lemma}[section]
\newtheorem{cor}[lemma]{Corollary}
\newtheorem{prop}[lemma]{Proposition}
\newtheorem{thm}[lemma]{Theorem}
\theoremstyle{definition}
\newtheorem{defn}[lemma]{Definition}
\theoremstyle{remark}
\newtheorem{rem}[lemma]{Remark}
\def\ad{\mathrm{ad}}
\begin{document}
\title{Characteristic polynomials and finitely dimensional representations of $\mathfrak{sl}(2, \C )$}
\author{Tianyi Jiang, Shoumin Liu\footnote{The author is funded by the NSFC (Grant No. 11971181).}}
\date{}
\maketitle

\begin{abstract}
In this paper, we obtain a  general formula for the characteristic polynomial of  a finitely dimensional  representation of Lie algebra   $\mathfrak{sl}(2, \C )$ and the form for these characteristic polynomials, and
prove there is one to one correspondence  between  representations and their characteristic polynomials. We define a product on these  characteristic polynomials,  endowing them with a monoid structure.

\end{abstract}

\section{Introduction}
The determinant  and eigenvalues of a square matrix are  classical topics in linear algebra.
The interaction of several matrices is an important subject in multilinear algebra. We generalize the characteristic polynomial $\det(ZI-A)$ of  a square matrix $A$ for several matrices, which means that we study
the polynomial $\det(z_0I+z_1A_1+\cdots+z_mA_m)$ for $n\times n$  matrices $A_1$, $A_2$, $\cdots$, $A_m$.  In \cite{Y2009}, Yang defines the projective spectrums through the multiparameters pencil $z_1A_1+\cdots+z_mA_m$,
and there are many fruitful results in \cite{CSZ2016}, \cite{GY2017} and \cite{HY2014}.
For a  representation $\pi$ of  a finite group $G=\{g_i\}_{i=1}^{n}$, the characteristic polynomial $$Q_{\pi}(z_0,z_1,\cdots,z_n)=\det(z_0I+\sum_{i=1}^{n}z_i\pi(g_i))$$  is well studied in \cite{D1969} and  \cite{F1896},
where they prove that the characteristic   polynomial of $\pi$ is irreducible when
$\pi$ is  an irreducible representation of $G$.
It is quite natural to consider similar topics for  Lie algebras of finite dimension. In  \cite{CCD2019}, \cite{HZ2019} and \cite{H2021}, the characteristic polynomial $f_\pi(z_0,z_1,z_2,z_3)=\det(z_0 I +z_1\phi(h)+z_2\phi(e_1)+z_3\phi(e_2))$ in   $\mathfrak{sl}(2, \C )$  on
its irreducible representation $\pi$ of dimension $m+1$ is obtained, which is
\begin{eqnarray}\label{fpi}
 f_\pi=\begin{cases}
 z_0\prod_{l=1}^{m/2}\left(z_0^2-4l^2(z_1^2+z_2z_3)\right) \quad 2\mid m.\\
 \\
 \prod_{l=0}^{(m-1)/2}\left(z_0^2-(2l+1)^2(z_1^2+z_2z_3)\right)\quad 2\nmid m.
 \end{cases}
\end{eqnarray}

  The finitely dimensional  representations  of $\mathfrak{sl}(2, \C )$ are well studied, so it is natural to consider their  characteristic polynomials,
  and describe all these polynomials and
  the corresponding relation between  them.
  Since the representations of $\mathfrak{sl}(2, \C )$ can form a monoidal category, there must be some algebraic structure on their  characteristic polynomials, hence it is possible to endow these polynomials with an algebraic structure.

  Our paper is sketched as the following. In Section \ref{CA}, we present a theorem which can allow us to replace the canonical basis of $\mathfrak{sl}(2, \C )$ under conjugation automorphisms.
  Apply the conclusion from  Section \ref{CA}, we present a new approach to prove the Hu and Zhang's conjecture in Section \ref{HZCONJ}. In Section \ref{Moddec}, we show a clear formula of the characteristic polynomial for  any finitely dimensional representation of $\mathfrak{sl}(2, \C )$,
  and prove there is one to one correspondence between finitely dimensional representations and their characteristic polynomials, and we also  present the forms of all characteristic polynomials. By the tensor product of representations, in Section \ref{monoid} we define the resolution product of two characteristic polynomials, and we show there is a monoid structure on  characteristic polynomials under the resolution product. In Section \ref{adjrep}, we apply our results to the adjoint representation of   $\mathfrak{sl}(n, \C )$ restricted to  $\mathfrak{sl}(2, \C )$ to explore some results about   the characteristic polynomial of  $\mathfrak{sl}(n, \C )$.

\section{Conjugation automorphisms on $\mathfrak{sl}(2, \C )$}\label{CA}
The Lie  algebra  $\mathfrak{sl}(2, \C )$ is a basic object in Lie theory, and it is well known that it has a canonical basis
$$h=\left(
      \begin{array}{cc}
        1 & 0 \\
        0 & -1 \\
      \end{array}
    \right),\quad
 e_1=\left(
      \begin{array}{cc}
        0 & 1 \\
        0 & 0 \\
      \end{array}
    \right), \quad
     e_2=\left(
      \begin{array}{cc}
        0 & 0 \\
        1 & 0 \\
      \end{array}
    \right),$$
 with their Lie bracket relations:
 \begin{equation}
 [e_1,e_2]=h,\quad [h,e_1]=2e_1,\quad [h,e_2]=-2e_2.\label{3rels}
 \end{equation}
By the Lie algebra automorphisms of $\mathfrak{sl}(2, \C )$, there are many possible choices for this kind of basis, the following theorem implies more choices for them.
 \begin{thm}\label{hinsl2}
 Suppose that we have a matrix $h^{'}\in \mathfrak{sl}(2, \C)$ with $\det(h)=-1$, then there exists a $2\times 2$ invertible matrix $A$,
such that
$ AhA^{-1}=h^{'}$. Let $Ae_1A^{-1}=e_1^{'}$, $Ae_2A^{-1}=e_2^{'}$. Therefore, $h^{'}$, $e_1^{'}$, $e_2^{'}$ satisfy the relations (\ref{3rels}) by replacing
 $h$, $e_1$, $e_2$, respectively.
 \end{thm}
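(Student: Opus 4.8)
The plan is to split the statement into two essentially independent parts: first produce the conjugating matrix $A$ from the spectral data of $h'$, and then observe that the three bracket relations transfer for free because conjugation is a Lie algebra automorphism.

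First I would analyze the eigenvalues of $h'$. Since $h'\in\mathfrak{sl}(2,\C)$ is traceless, its characteristic polynomial is $\lambda^{2}-\tr(h')\,\lambda+\det(h')=\lambda^{2}+\det(h')=\lambda^{2}-1$, where the last equality uses the hypothesis $\det(h')=-1$. Hence the eigenvalues of $h'$ are exactly $+1$ and $-1$. Because these two eigenvalues are \emph{distinct}, $h'$ is diagonalizable and is therefore similar to $\operatorname{diag}(1,-1)=h$. Concretely, choosing eigenvectors $v_{+},v_{-}$ of $h'$ for the eigenvalues $+1,-1$ and letting $A$ be the invertible matrix whose columns are $v_{+},v_{-}$, one checks $h'A=A\,\operatorname{diag}(1,-1)=Ah$, so that $AhA^{-1}=h'$. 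This supplies the required $A$.

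Next, with $A$ fixed, I would set $e_{1}^{'}=Ae_{1}A^{-1}$ and $e_{2}^{'}=Ae_{2}A^{-1}$ as in the statement and verify the relations (\ref{3rels}) by a single general observation: the conjugation map $X\mapsto AXA^{-1}$ is an automorphism of the matrix algebra, hence a Lie algebra automorphism, because $[AXA^{-1},AYA^{-1}]=A(XY-YX)A^{-1}=A[X,Y]A^{-1}$. Applying this to the three defining brackets of $h,e_{1},e_{2}$ immediately gives $[e_{1}^{'},e_{2}^{'}]=A[e_{1},e_{2}]A^{-1}=AhA^{-1}=h'$, then $[h',e_{1}^{'}]=A[h,e_{1}]A^{-1}=A(2e_{1})A^{-1}=2e_{1}^{'}$, and likewise $[h',e_{2}^{'}]=-2e_{2}^{'}$. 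Since conjugation also preserves tracelessness, $h',e_{1}^{'},e_{2}^{'}$ indeed lie in $\mathfrak{sl}(2,\C)$.

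There is no serious obstacle here; the only point requiring genuine attention is the first step, namely confirming that the hypotheses force $h'$ to have the distinct eigenvalues $\pm 1$, so that true diagonalizability (not merely triangularizability) holds. This is exactly where the condition $\det(h')=-1$ combines with the trace-zero property of $\mathfrak{sl}(2,\C)$. The second part is purely formal, resting only on the multiplicativity of conjugation, so no case analysis or delicate computation is needed once the eigenvalue picture is in place.
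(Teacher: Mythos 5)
Your proof is correct and follows essentially the same route as the paper: use tracelessness plus $\det(h')=-1$ to pin the eigenvalues at $\pm 1$, conclude $h'$ is similar to $h$, and transfer the bracket relations via the fact that conjugation $x\mapsto AxA^{-1}$ is a Lie algebra automorphism. You merely make explicit two points the paper leaves as routine --- that distinct eigenvalues give genuine diagonalizability, and the one-line check that conjugation preserves brackets.
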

 \begin{proof} Suppose that the eigenvalues of $h^{'}$ are $\lambda_1$, $\lambda_2$.
 Since  we have $h^{'}\in \mathfrak{sl}(2, \C)$ with $\det(h)=-1$, it follows that
 $$\lambda_1+\lambda_2=0,\quad \lambda_1\lambda_2=-1,$$
 therefore, we have $\{\lambda_1, \lambda_2\}=\{\pm 1\}.$  So there is  a $2\times 2$ invertible matrix $A$,
such that
$ AhA^{-1}=h^{'}$. Easily, we can check that the conjugation map,
\begin{eqnarray*}
\sigma_{A}: \mathfrak{sl}(2, \C)&\rightarrow& \mathfrak{sl}(2, \C),\\
                x&\mapsto& AxA^{-1}
\end{eqnarray*}
is a Lie algebra automorphism on $\mathfrak{sl}(2, \C)$. Hence the  three elements
 $h^{'}$, $e_1^{'}$, $e_2^{'}$ satisfy the relations (\ref{3rels}) by replacing
 $h$, $e_1$, $e_2$, respectively.
 \end{proof}
 \section{Hu-Zhang's conjecture by conjugation automorphisms}\label{HZCONJ}
 \begin{rem}\label{basis}
 It is well known that  $\mathfrak{sl}(2, \C)$ has an irreducible representation $\phi:\mathfrak{sl}(2, \C)\rightarrow \mathfrak{gl}(V) $,
  where $V$ is a complex vector space of dimension $m+1$  with maximal eigenvalue (highest weight) $m$
for $h$ and $m\geq 0$. And the linear space $V$ has a basis $v_0$, $v_1$, $\cdots$, $v_m$, with $\phi(h)(v_i)=(m-2i) v_i$.
When we replace $h$ by $h^{'}$ in Theorem \ref{hinsl2}, the matrix $\phi(h^{'})$ still has eigenvalues $-m$, $-m+2$, $\cdots$, $m-2$, $m$, since the Lie algebra can be defined by generators and relations(\cite[Chapter 18]{H1972}).
\end{rem}
\begin{defn}
The polynomial $$f_{\varphi}(z_0, z_1, z_2, z_3)=\det(z_0 I +z_1\varphi(h)+z_2\varphi(e_1)+z_3\varphi(e_2))$$
 is called the characteristic polynomial of $\mathfrak{sl}(2, \C)$ on the representation $\varphi$, where $\varphi$ is a finite representation of $\mathfrak{sl}(2, \C)$.
\end{defn}
In \cite{HZ2019}, Hu and Zhang present a conjecture that
$$f_m(z_0, z_1)=f_{\phi}(z_0, z_1, 1, 1)=\prod_{i=1}^{m}\left(z_0-(m-2i)\sqrt{z_1^2+1}\right).$$
The formula plays an important role in showing (\ref{fpi}) in \cite{CCD2019}.
It is proved  by showing the eigenvalues and eigenvectors directly in \cite{CCD2019},  or by computing the eigenvalues of tridiagonal matrices in  \cite{H2021}.
Here we present a proof by avoiding these computations.
\begin{proof} Let
$$h^{'}=\left(
         \begin{array}{cc}
           0 & 1 \\
           1 & 0 \\
         \end{array}
       \right)=e_1+e_2.
$$ Then we can see that $h^{'}$ satisfies the condition in Theorem \ref{hinsl2}, then we can find the $e_1^{'}$ and $e_2^{'}$ for  Theorem \ref{hinsl2}, where

$$e_1^{'}=\left(
         \begin{array}{cc}
           1/2 & -1/2 \\
           1/2& -1/2 \\
         \end{array}
       \right),\quad
       e_2^{'}=\left(
         \begin{array}{cc}
           1/2 & 1/2 \\
           -1/2& -1/2 \\
         \end{array}
       \right).
$$
So $h=e_1^{'}+ e_2^{'}$,  and
$z_1h+e_1+e_2=z_1(e_1^{'}+e_2^{'})+h^{'}$.
Up to some base change on $V$, $h^{'}$, $e_1^{'}$, and $e_2^{'}$ have the same matrices
as $h$, $e_1$,  and $e_2$, respectively. Therefore, it follows that
\begin{equation}\label{fbsc}
f_{\phi}(z_0,z_1,z_2,z_3)=\det(z_0 I +z_1\phi(h^{'})+z_2\phi(e_1^{'})+z_3\phi(e_2^{'}))
\end{equation}
Hence we have  \begin{eqnarray*}
f_m(z_0,z_1)&=&f_{\phi}(z_0,z_1,1,1)\\
&=&\det(z_0 I +z_1\phi(h)+\phi(e_1)+\phi(e_2))\\
&=&\det(z_0 I +z_1\phi(h^{'})+\phi(e_1^{'})+\phi(e_2^{'}))\\
&=&\det(z_0 I +z_1\phi(e_1+e_2)+\phi(h))\\
&=&\det(z_0 I +z_1\phi(e_1)+z_1\phi(e_2)+\phi(h))\\
&=&f_{\phi}(z_0, 1,z_1,z_1)\\
&=& \det\left(z_0 I +\phi(h+z_1(e_1+e_2))\right).
 \end{eqnarray*}
 We have
 $$h+z_1(e_1+e_2)=\left(
                    \begin{array}{cc}
                      1 & z_1 \\
                      z_1 & -1 \\
                    \end{array}
                  \right).
 $$
 It follows that $\det(h+z_1(e_1+e_2))=-1-z_1^2$. When $z_1^2\neq -1$,  we can check that
 $$h^{''}=\frac{h+z_1(e_1+e_2)}{\sqrt{1+z_1^2}}$$
 satisfies the Theorem \ref{hinsl2}. Therefore, by Theorem \ref{hinsl2} and Remark \ref{basis}, we can have that $\phi(h^{''})$ has eigenvalues
 $-m$, $-m+2$, $\cdots$, $m$, which implies that $h+z_1(e_1+e_2)$ has eigenvalues $-\sqrt{1+z_1^2}m$, $-\sqrt{1+z_1^2}(m-2)$, $\cdots$, $\sqrt{1+z_1^2}m$.
 Hence when we consider the $z_0$ and $z_1$ are complex numbers, and $z_1^2\neq -1$, it follows that
$$f_m(z_0,z_1)=\prod_{i=0}^{m}\left(z_0+(m-2i)\sqrt{1+z_1^2}\right),$$
 or \begin{equation}\label{fm}
 f_m(z_0,z_1)=\begin{cases}
 z_0\prod_{l=1}^{m/2}\left(z_0^2-4l^2(1+z_1^2)\right) \quad 2\mid m.\\
 \\
 \prod_{l=0}^{(m-1)/2}\left(z_0^2-(2l+1)^2(1+z_1^2)\right)\quad 2\nmid m.
 \end{cases}
\end{equation}
It is known that  $\C^{2}\setminus \{z_1^2=-1\}$ is dense in $\C^{2}$,  so the conjecture holds by the continuity of polynomial functions.
\end{proof}
\begin{rem} In the proof, we can obtain the conclusion without so many deformations, by considering the eigenvalues of
$$z_1h+(e_1+e_2)=\left(
                  \begin{array}{cc}
                    z_1 & 1 \\
                    1 & -z_1 \\
                  \end{array}
                \right).
$$ When we apply  the formula (\ref{fbsc}), we can get one formula
$$f_{\phi}(z_0,z_1,1,1)=f_{\phi}(z_0,1,z_1,z_1),$$ which can show more symmetric relations among the variables $z_1$ , $z_2$ and $z_3$.
\end{rem}

 \section{Characteristic polynomials of $\mathfrak{sl}(2, \C)$}\label{Moddec}
In \cite{CCD2019}, the authors obtain the  $f_{\phi}(z_0,z_1,z_2,z_3)$ by proving the Hu's conjecture.
Now we consider  a $\mathfrak{sl}(2, \C)$ representation $\phi:\mathfrak{sl}(2, \C)\rightarrow \mathfrak{gl}(V)$, where $V$ is a finite dimension complex vector space.
It is known that  $\phi(h)$ is semisimple (diagonalizable) with integer eigenvalues. For each $n\in \Z$, we use $d_{n,\phi}$ denote the dimension of  eigenvector space $\phi(h)$ of the eigenvalue $n$, namely
$$d_{n,\phi}=\dim\{v\in V\mid \phi(h)v=nv\}.$$
For the representation $\phi$, the following theorem holds.
\begin{thm}\label{reptopol}
Let $\phi:\mathfrak{sl}(2, \C)\rightarrow \mathfrak{gl}(V)$ be a representation of $\mathfrak{sl}(2, \C)$, then
the characteristic polynomial
\begin{equation}\label{fv}
f_{\phi}(z_0,z_1,z_2,z_3)=z_0^{d_{0,\phi}}\prod_{n\geq 1}(z_0^2-n^2(z_1^2+z_2z_3))^{d_{n,\phi}}.
\end{equation}

\end{thm}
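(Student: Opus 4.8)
The plan is to reduce everything to the irreducible case, where the formula is already known from equation~(\ref{fpi}), and then to read off the exponents by a weight count. First I would invoke Weyl's theorem on complete reducibility: since $V$ is finite dimensional, it decomposes as a direct sum $V \cong \bigoplus_{m \geq 0} V_m^{\oplus a_m}$ of the irreducible representations $V_m$ of highest weight $m$ (of dimension $m+1$), with nonnegative integer multiplicities $a_m$, all but finitely many zero. With respect to this decomposition every operator $\phi(x)$ is block diagonal, so the matrix $z_0 I + z_1\phi(h) + z_2\phi(e_1) + z_3\phi(e_2)$ is block diagonal and its determinant factors as the product over the blocks. Writing $\phi_m$ for the restriction to a summand isomorphic to $V_m$, this yields
\[
f_\phi(z_0,z_1,z_2,z_3) = \prod_{m \geq 0} f_{\phi_m}(z_0,z_1,z_2,z_3)^{a_m},
\]
reducing the problem to the irreducible pieces.

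Next I would substitute the irreducible formula~(\ref{fpi}) (equivalently (\ref{fm}), already expressed in terms of $z_1^2+z_2z_3$, so that no square roots intervene):
\[
f_{\phi_m} = \begin{cases} z_0 \prod_{l=1}^{m/2}\bigl(z_0^2 - 4l^2(z_1^2+z_2z_3)\bigr) & 2 \mid m, \\ \prod_{l=0}^{(m-1)/2}\bigl(z_0^2 - (2l+1)^2(z_1^2+z_2z_3)\bigr) & 2 \nmid m. \end{cases}
\]
Reading off the factors, the factor $z_0$ occurs in $f_{\phi_m}$ with exponent $1$ exactly when $m$ is even, and the factor $\bigl(z_0^2 - n^2(z_1^2+z_2z_3)\bigr)$ occurs with exponent $1$ precisely when $1 \leq n \leq m$ and $n \equiv m \pmod 2$. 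This is exactly the condition that $n$ (equivalently $-n$) is a weight of $V_m$, while $m$ even is exactly the condition that $0$ is a weight of $V_m$.

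Finally I would insert these into the product and collect like factors. The total exponent of $z_0$ in $f_\phi$ is $\sum_{m \text{ even}} a_m$, which counts, with multiplicity, the irreducible summands carrying the weight $0$; this is precisely $d_{0,\phi} = \dim\ker\phi(h)$. For each fixed $n \geq 1$, the total exponent of $\bigl(z_0^2 - n^2(z_1^2+z_2z_3)\bigr)$ is $\sum_{m \geq n,\ m \equiv n \,(2)} a_m$, which counts the summands $V_m$ whose weight set contains $n$, and this is exactly $d_{n,\phi}$. Assembling these exponents gives
\[
f_\phi = z_0^{d_{0,\phi}} \prod_{n \geq 1}\bigl(z_0^2 - n^2(z_1^2 + z_2z_3)\bigr)^{d_{n,\phi}},
\]
which is the asserted identity~(\ref{fv}).

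The individual steps are routine; the one place deserving care is the bookkeeping in the last step, namely checking that the combinatorial count $\sum_{m \geq n,\ m \equiv n} a_m$ of irreducibles carrying weight $n$ genuinely equals the geometric multiplicity $d_{n,\phi}$. This rests on the standard $\mathfrak{sl}(2,\C)$ weight-multiplicity fact that each $V_m$ contributes the weight $n$ once when $|n| \leq m$ and $n \equiv m \pmod 2$ and not otherwise, so the main obstacle is conceptual clarity rather than a real difficulty. I would also note at the outset, as recorded in the text, that $\phi(h)$ is diagonalizable with integer eigenvalues, so that $d_{n,\phi}$ is well defined and the weight multiplicities are symmetric, $d_{n,\phi} = d_{-n,\phi}$; this symmetry is what allows the factors $(z_0+nw)$ and $(z_0-nw)$ to pair into $(z_0^2 - n^2 w^2)$ in the irreducible formula and is implicitly built into~(\ref{fpi}).
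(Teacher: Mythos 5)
Your proposal is correct and takes essentially the same approach as the paper: reduce to irreducible summands via complete reducibility, invoke the known irreducible formula (\ref{fpi}), and identify the collected exponents with the weight multiplicities $d_{n,\phi}$. The paper phrases the final bookkeeping through the additivity $d_{n,\phi}=\sum_{t}d_{n,\phi_t}$ over the irreducible summands rather than your explicit count $\sum_{m\geq n,\ m\equiv n\,(2)}a_m$, but these are the same computation.
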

\begin{proof}
By (\ref{fpi}), it is easy to verify that the formula (\ref{fv}) holds for the finite dimensional  irreducible representation of $\mathfrak{sl}(2, \C)$.
By the semisimplity of $\mathfrak{sl}(2, \C)$, then we have the decomposition $$\phi\simeq \oplus_{t=1}^{s}\phi_t ,$$ Where each
$\phi_t$ is an irreducible representation of $\mathfrak{sl}(2, \C)$.
By the definition of characteristic polynomial,  we have $$f_{\phi}(z_0,z_1,z_2,z_3)=\prod_{t=1}^{s}f_{\phi_t}(z_0,z_1,z_2,z_3).$$
Furthermore, we have $d_{n,\phi}=\sum_{t=1}^{s}d_{n,\phi_t}$. It follows that
\begin{eqnarray*}
f_{\phi}(z_0,z_1,z_2,z_3)&=&\prod_{t=1}^{s}f_{\phi_t}(z_0,z_1,z_2,z_3)\\
&=&\prod_{t=1}^{s}\left(z_0^{d_{0,\phi_t}}\prod_{n\geq 1}(z_0^2-n^2(z_1^2+z_2z_3))^{d_{n,\phi_t}}\right)\\
&=& z_0^{\sum_{t=1}^{s}d_{0,\phi_t}}\prod_{n\geq 1}(z_0^2-n^2(z_1^2+z_2z_3))^{\sum_{t=1}^{s}d_{n,\phi_t}}\\
&=&z_0^{d_{0,\phi}}\prod_{n\geq 1}(z_0^2-n^2(z_1^2+z_2z_3))^{d_{n,\phi}}.
\end{eqnarray*}
\end{proof}
On the other direction, the structure of  a finite dimensional $\mathfrak{sl}(2, \C)$-module is totally decided by its characteristic polynomial.
\begin{thm}\label{poltorep}
Two finite dimensional  representations $\phi$ and $\psi$ of $\mathfrak{sl}(2, \C)$ are  isomorphic if and only if they have the same characteristic polynomial.
\end{thm}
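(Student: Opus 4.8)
The plan is to prove the two directions separately, the forward one being immediate and the reverse one resting on the explicit shape of the characteristic polynomial recorded in Theorem \ref{reptopol}. If $\phi$ and $\psi$ are isomorphic, then there is an invertible change of basis conjugating $\phi(x)$ to $\psi(x)$ simultaneously for every $x\in\mathfrak{sl}(2,\C)$; in particular the matrix $z_0 I+z_1\phi(h)+z_2\phi(e_1)+z_3\phi(e_2)$ and its counterpart for $\psi$ are conjugate over the field $\C(z_0,z_1,z_2,z_3)$, so their determinants coincide and $f_\phi=f_\psi$. The content is therefore the converse.

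For the converse I would first argue that the multiplicities $d_{n,\phi}$ can be read off from $f_\phi$ itself. By Theorem \ref{reptopol} we have the factorization $f_\phi=z_0^{d_{0,\phi}}\prod_{n\geq1}(z_0^2-n^2(z_1^2+z_2z_3))^{d_{n,\phi}}$ inside the unique factorization domain $\C[z_0,z_1,z_2,z_3]$. Each quadric $z_0^2-n^2(z_1^2+z_2z_3)$ is a nondegenerate quadratic form in four variables over $\C$, hence irreducible, and for distinct $n\geq1$ these factors are pairwise non-associate (their $z_1^2$-coefficients differ); the factor $z_0$ is likewise irreducible and coprime to all of them. Consequently the exponents $d_{0,\phi}$ and $d_{n,\phi}$ are intrinsically determined by $f_\phi$, so the hypothesis $f_\phi=f_\psi$ forces $d_{n,\phi}=d_{n,\psi}$ for every $n$.

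It remains to show that the full collection of weight-space dimensions $d_{n,\phi}$ determines $\phi$ up to isomorphism. By complete reducibility (the semisimplicity of $\mathfrak{sl}(2,\C)$ already invoked in Theorem \ref{reptopol}) we may write $\phi\cong\bigoplus_{m\geq0}a_m V(m)$, where $V(m)$ denotes the irreducible representation of dimension $m+1$ and $a_m\geq0$ is its multiplicity; the representation is determined up to isomorphism by the tuple $(a_m)_{m\geq0}$. Since the $h$-weights occurring in $V(m)$ are exactly $m,m-2,\dots,-m$, each once, for $n\geq0$ one has $d_{n,\phi}=\sum_{k\geq0}a_{n+2k}$. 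This linear system is upper triangular and invertible, with explicit inverse $a_n=d_{n,\phi}-d_{n+2,\phi}$. Hence the multiplicities $a_m$ are recovered from the numbers $d_{n,\phi}$, so equal characteristic polynomials give equal weight multiplicities, equal decompositions, and thus isomorphic representations. The one delicate point is the unique-factorization step in the second paragraph, where one must confirm that the displayed quadrics really are irreducible and mutually coprime so that the exponents are extracted unambiguously; once that is in place, everything reduces to the elementary inversion $a_n=d_{n,\phi}-d_{n+2,\phi}$.
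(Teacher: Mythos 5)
Your proposal is correct and takes essentially the same route as the paper: read off the exponents $d_{n,\phi}$ from the unique factorization of $f_\phi$ in the UFD $\C[z_0,z_1,z_2,z_3]$, then use complete reducibility to reconstruct the representation from these weight multiplicities. The only differences are cosmetic --- where the paper recovers the multiplicities of the irreducibles by downward induction (identifying $l_N=d_N$ and dividing out $f_{\varphi_N}^{d_N}$), you invert the triangular system explicitly via $a_n=d_{n,\phi}-d_{n+2,\phi}$, and you additionally verify the irreducibility and pairwise non-associateness of the quadric factors, a point the paper's appeal to unique factorization leaves implicit.
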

\begin{proof} The necessarity is obvious. Suppose that  the characteristic polynomial $f_{\phi}(z_0,z_1,z_2,z_3)$ is fixed,
we will show that the representation $\phi$ can be reconstructed in a unique way.
Because the polynomial ring $\C[z_0,z_1,z_2,z_3]$ is a unique factorization ring, by Theorem \ref{reptopol}, we can suppose that
\begin{equation}\label{fN}
f_{\phi}(z_0,z_1,z_2,z_3)=z_0^{d_0}\prod_{n= 1}^{N}(z_0^2-n^2(z_1+z_2z_3))^{d_n},
\end{equation}
with $d_N\geq 1$.
Let $\varphi_m$ be the irreducible representation of $\mathfrak{sl}(2, \C)$ of dimension $m+1$ with highest weight $m$, for $m\geq 0$.
Suppose that $$\phi\simeq \oplus_{m\geq 0} l_m \varphi_m,$$
 where $l_m$ is the multiplicity of $\varphi_m$ in $\phi$. By the  equation  (\ref{fN}), we see that
 $$\phi\simeq \oplus_{m=0}^{N} l_m \varphi_m.$$
For Characteristic polynomials, it is known that $$f_{\phi}=\prod_{m=0}^{N}f_{\varphi_m}^{l_m}.$$  Therefore by the  equation  (\ref{fN}),
we see that $l_N=d_N$.
Therefore $$f_{\oplus_{m=0}^{N-1} l_m \varphi_m}=\frac{f_{\phi}}{f_{\varphi_N}^{d_N}},$$ thus the $l_m$ for $m\leq N-1$ can be decided by induction.
\end{proof}
From the algorithm in the Proof of Theorem \ref{poltorep}, it is easy to obtain the following description for the  characteristic polynomials of all finite dimensional representations of $\mathfrak{sl}(2, \C)$.
\begin{cor} A polynomial $f_(z_0,z_1,z_2,z_3)\in \C[z_0,z_1,z_2,z_3]$ is a characteristic polynomial of a finite dimensional representation of $\mathfrak{sl}(2, \C)$ if and only if
$$f_{\phi}(z_0,z_1,z_2,z_3)=z_0^{d_0}\prod_{n= 1}^{N}(z_0^2-n^2(z_1^2+z_2z_3))^{d_n}$$ with $d_n\geq d_{n+2}$, for any $n\in \N.$
\end{cor}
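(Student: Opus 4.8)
The plan is to read this corollary as the two directions of Theorem~\ref{reptopol} and Theorem~\ref{poltorep} packaged together, the only genuinely new content being the inequality $d_n\geq d_{n+2}$. I would derive this from an explicit dictionary between the exponents $d_n$ appearing in $f_\phi$ and the multiplicities $l_m$ of the irreducibles $\varphi_m$ in $\phi$. The single identity driving everything is that, for each $n\geq 0$, the difference $d_n-d_{n+2}$ equals exactly $l_n$, the multiplicity of $\varphi_n$; once this is in hand, both implications fall out at once.

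For the forward direction, suppose $f=f_\phi$. By Theorem~\ref{reptopol} it already has the displayed shape with $d_n=d_{n,\phi}$, so I only need the inequality. Writing $\phi\simeq\oplus_{m\geq 0}l_m\varphi_m$ and recalling (as in Remark~\ref{basis}) that $\varphi_m$ has weights $-m,-m+2,\dots,m-2,m$, each of multiplicity one, I obtain for every $n\geq 0$ the weight count $d_{n,\phi}=\sum_{m\geq n,\, m\equiv n\,(2)}l_m$. Subtracting the same expression for $n+2$ telescopes to $d_{n,\phi}-d_{n+2,\phi}=l_n\geq 0$, which is precisely $d_n\geq d_{n+2}$ (for $n=0$ this reads $d_0\geq d_2$, consistent with $0\in\N$).

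For the converse, I start from a polynomial of the stated form, adopt the convention $d_n=0$ for $n>N$, and define $l_n:=d_n-d_{n+2}$ for $n\geq 0$; the hypothesis guarantees each $l_n$ is a nonnegative integer and only finitely many are nonzero. Setting $\phi:=\oplus_{n\geq 0}l_n\varphi_n$, I would compute its characteristic polynomial via Theorem~\ref{reptopol}: its weight multiplicities are $d_{n,\phi}=\sum_{k\geq 0}l_{n+2k}=\sum_{k\geq 0}(d_{n+2k}-d_{n+2k+2})$, a telescoping sum collapsing to $d_n$. Hence $f_\phi=f$, exhibiting $f$ as a genuine characteristic polynomial. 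This is exactly the top-down peeling algorithm of Theorem~\ref{poltorep}, now read off in closed form.

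The computation is routine linear bookkeeping, so the only place demanding care—and the nearest thing to an obstacle—is the parity and range convention: one must consistently treat $d_n$ and $l_n$ as defined for all $n\in\N$ with $d_n=0$ beyond $N$, keep the congruence $n\equiv m\pmod 2$ throughout the weight counts, and verify that the telescoping sums are finite. The genuine mathematical input, namely that $\varphi_m$ contributes exactly one weight vector to each weight $n$ with $|n|\leq m$ and $n\equiv m\pmod 2$, is standard $\mathfrak{sl}(2, \C )$ representation theory and is already invoked in Remark~\ref{basis} and in the proof of Theorem~\ref{poltorep}.
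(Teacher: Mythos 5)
Your proposal is correct and follows essentially the route the paper intends: the paper offers no explicit proof, deferring to the peeling algorithm of Theorem~\ref{poltorep}, and your closed-form dictionary $l_n = d_n - d_{n+2}$ (with the telescoping weight count $d_{n,\phi}=\sum_{m\geq n,\,m\equiv n\,(2)}l_m$) is exactly that algorithm made explicit. If anything, your writeup is more complete than the paper's, since it actually verifies both directions of the inequality condition $d_n\geq d_{n+2}$, which the paper dismisses as ``easy to obtain.''
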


\section{ A monoid structure on $\mathbf{CP}_{\mathfrak{sl}(2, \C)}$}\label{monoid}
 We denote  characteristic polynomials of all finite dimensional representations of $\mathfrak{sl}(2, \C)$ by $\mathbf{CP}_{\mathfrak{sl}(2, \C)}$ in the Section \ref{Moddec}, which is a subset of $\C[z_0,z_1,z_2,z_3]$.
 In this section, a monoid structure  on $\mathbf{CP}_{\mathfrak{sl}(2, \C)}$ will be defined.
 \begin{defn}\label{respro}
 Let $f_{\phi}$, $f_{\psi}$ be two polynomials in $\mathbf{CP}_{\mathfrak{sl}(2, \C)}$. Considering $z_0=z_0-0\sqrt{z_1^{2}+z_2z_3}$ in Theorem \ref{reptopol},  for two representations $\phi$ and $\psi$,
  we can write
 \begin{eqnarray*}
 f_{\phi}=\prod_{i=1}^{n}\left(z_0+\alpha_i\sqrt{z_1^{2}+z_2z_3}\right),\\
 f_{\psi}=\prod_{j=1}^{m}\left(z_0+\beta_j\sqrt{z_1^{2}+z_2z_3}\right),
 \end{eqnarray*}
 with $\alpha_i$, $i=1,\cdots, n$ being all the eigenvalues of $\phi(h)$ and
  $\beta_j$, $j=1,\cdots, m$ being all the eigenvalues of $\psi(h)$.
Define $f_{\phi} * f_{\psi}\in \C[\sqrt{z_1^{2}+z_2z_3},z_0,z_1,z_2,z_3]$ by
 \begin{equation}
 f_{\phi} * f_{\psi}=\prod_{i.j=1}^{n,m}\left(z_0+(\alpha_i+\beta_j)\sqrt{z_1^{2}+z_2z_3}\right)
 \end{equation}
 and we call $ f_{\phi} * f_{\psi}$ the \emph{resolution product} of   $f_{\phi}$ and $f_{\psi}$.
  \end{defn}
  \begin{prop} \label{tenprod}
  Let $f_{\phi}$, $f_{\psi}$ be two polynomials in $\mathbf{CP}_{\mathfrak{sl}(2, \C)}$ as in the Definition \ref{respro}. It follows that
   $$f_{\phi} * f_{\psi}=f_{\phi\otimes \psi}.$$
  \end{prop}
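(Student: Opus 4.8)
The plan is to reduce everything to a single structural fact about tensor products of Lie algebra representations, namely that for $x\in\mathfrak{sl}(2,\C)$ the operator $(\phi\otimes\psi)(x)$ acts on $V\otimes W$ by $(\phi\otimes\psi)(x)=\phi(x)\otimes I_W+I_V\otimes\psi(x)$, since a Lie algebra acts on a tensor product through primitive elements. Specializing to $x=h$ turns the problem into a computation of the eigenvalues of the Kronecker sum $\phi(h)\otimes I_W+I_V\otimes\psi(h)$, after which the result follows by applying Theorem \ref{reptopol} to the representation $\phi\otimes\psi$, which is again a finite-dimensional representation of $\mathfrak{sl}(2,\C)$.

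First I would record that, since $\phi(h)$ and $\psi(h)$ are semisimple, we may choose bases $v_1,\dots,v_n$ of $V$ and $w_1,\dots,w_m$ of $W$ consisting of eigenvectors, with $\phi(h)v_i=\alpha_i v_i$ and $\psi(h)w_j=\beta_j w_j$, where the $\alpha_i$ and $\beta_j$ are exactly the eigenvalues (with multiplicity) appearing in Definition \ref{respro}. Then each $v_i\otimes w_j$ satisfies
$$(\phi\otimes\psi)(h)(v_i\otimes w_j)=(\alpha_i+\beta_j)(v_i\otimes w_j),$$
and the vectors $v_i\otimes w_j$ form a basis of $V\otimes W$. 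Hence $(\phi\otimes\psi)(h)$ is diagonalizable and its multiset of eigenvalues is precisely $\{\alpha_i+\beta_j : 1\le i\le n,\ 1\le j\le m\}$; equivalently, $d_{k,\phi\otimes\psi}=\#\{(i,j):\alpha_i+\beta_j=k\}$ for each $k\in\Z$.

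Finally I would apply Theorem \ref{reptopol} to $\phi\otimes\psi$, using it in the factored form employed in Definition \ref{respro}: because the weight multiset of any finite-dimensional representation is symmetric under $k\mapsto -k$ (so $d_{k}=d_{-k}$), each factor $z_0^2-n^2(z_1^2+z_2z_3)$ splits as $(z_0+n\sqrt{z_1^2+z_2z_3})(z_0-n\sqrt{z_1^2+z_2z_3})$, and the whole expression equals $\prod_{\gamma}(z_0+\gamma\sqrt{z_1^2+z_2z_3})$, the product running over all eigenvalues $\gamma$ of $(\phi\otimes\psi)(h)$ with multiplicity. By the eigenvalue computation above this is exactly $\prod_{i,j}(z_0+(\alpha_i+\beta_j)\sqrt{z_1^2+z_2z_3})$, which is the definition of $f_{\phi}*f_{\psi}$. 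Thus $f_{\phi}*f_{\psi}=f_{\phi\otimes\psi}$.

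I expect the only point requiring genuine care to be the bookkeeping in the last step: one must check that the square-root factorization of Definition \ref{respro} really reproduces the polynomial of Theorem \ref{reptopol}, which rests on the negation symmetry $d_{k,\phi\otimes\psi}=d_{-k,\phi\otimes\psi}$ of the weight multiplicities (so that the product of the linear factors is an honest element of $\C[z_0,z_1,z_2,z_3]$ and of $\mathbf{CP}_{\mathfrak{sl}(2,\C)}$). Everything else — the tensor-product action formula and the fact that the eigenvalues of a Kronecker sum of commuting diagonalizable operators add — is routine, so the main obstacle is conceptual (confirming the factored form is well defined and that Theorem \ref{reptopol} applies verbatim to $\phi\otimes\psi$) rather than computational.
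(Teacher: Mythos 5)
Your proposal is correct and follows essentially the same route as the paper's own proof: choose eigenbases of $\phi(h)$ and $\psi(h)$, observe that $(\phi\otimes\psi)(h)$ acts on $v_i\otimes w_j$ with eigenvalue $\alpha_i+\beta_j$, and match the resulting factored form against Definition \ref{respro}. Your extra care in the last step --- verifying via the weight symmetry $d_{k}=d_{-k}$ that the square-root factorization of Theorem \ref{reptopol} is legitimate --- is a point the paper glosses over, but it is a refinement of the same argument rather than a different one.
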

  \begin{proof}

  Suppose that $\{v_i\}_{i=1}^{n}$ is a basis of representation $\phi$ such that  $$\phi(h)(v_i)=\alpha_i v_i,$$ and
    $\{w_j\}_{j=1}^{m}$ is a basis of representation $\psi$ such that  $$\psi(h)(w_j)=\beta_j w_j.$$
    Let $\lambda_{v_i}=\alpha_i$ and $\lambda_{w_j}=\beta_j$, for $1\leq i\leq n$ and $1\leq j\leq m$.
    It is known that $\{v_i\otimes w_j\}_{i=1,j=1}^{n,m}$ is a basis of $\phi\otimes \psi$. Furthermore, we have
    \begin{eqnarray*}
    &&\phi\otimes \psi(h)(v_i\otimes w_j)\\
    &=&\phi(h)\otimes I+ I\otimes \psi(h)(v_i\otimes w_j)\\
    &=&\phi(h)(v_i)\otimes w_j+v_i\otimes \psi(h)(w_j)=(\alpha_i+\beta_j)(v_i\otimes w_j).
    \end{eqnarray*}
    Let $\lambda_{v_i\otimes w_j}=\alpha_i+\beta_j$, for  $1\leq i\leq n$ and $1\leq j\leq m$.
    By $\lambda_{v_i}$,  $\lambda_{w_j}$ and $\lambda_{v_i\otimes w_j}$, we can rewrite the $f_{\phi}$, $f_{\psi}$  and $f_{\phi\otimes \psi}$ as the follows,
 \begin{eqnarray*}
 f_{\phi}&=&\prod_{i=1}^{n}\left(z_0+\lambda_{v_i}\sqrt{z_1^{2}+z_2z_3}\right),\\
 f_{\psi}&=&\prod_{j=1}^{m}\left(z_0+\lambda_{w_j}\sqrt{z_1^{2}+z_2z_3}\right),\\
 f_{\phi\otimes \psi}&=&\prod_{i.j=1}^{n,m}\left(z_0+\lambda_{v_i\otimes w_j}\sqrt{z_1^{2}+z_2z_3}\right)\\
 &=&\prod_{i.j=1}^{n,m}\left(z_0+(\alpha_i+\beta_j)\sqrt{z_1^{2}+z_2z_3}\right)= f_{\phi} * f_{\psi}.
 \end{eqnarray*}
 \end{proof}
 \begin{thm}
 The set $\mathbf{CP}_{\mathfrak{sl}(2, \C)}$ is a  commutative monoid under the resolution product with the unit element $z_0$.
 \end{thm}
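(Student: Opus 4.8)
The plan is to transport the (commutative) monoid structure that the tensor product induces on finite dimensional representations of $\mathfrak{sl}(2, \C)$ across the bijection between representations and their characteristic polynomials established in Theorem \ref{poltorep}, using the identification $f_{\phi} * f_{\psi} = f_{\phi\otimes\psi}$ from Proposition \ref{tenprod} as the bridge. Every monoid axiom will be checked by pulling it back to a statement about $\otimes$ and then pushing it forward again.

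First I would check that $*$ is well defined as an operation on $\mathbf{CP}_{\mathfrak{sl}(2, \C)}$ and that this set is closed under it. The multiset of eigenvalues $\{\alpha_i\}$ of $\phi(h)$ is recovered from $f_{\phi}$ through the factorization in Theorem \ref{reptopol} (the exponent $d_{n,\phi}$ records the multiplicity of the weight $\pm n$), so the product in Definition \ref{respro} does not depend on any auxiliary choice of representation or ordering of eigenvalues. Closure is then immediate from Proposition \ref{tenprod}: since $\phi\otimes\psi$ is again a finite dimensional representation of $\mathfrak{sl}(2, \C)$, its characteristic polynomial $f_{\phi\otimes\psi}=f_{\phi}*f_{\psi}$ lies in $\mathbf{CP}_{\mathfrak{sl}(2, \C)}$; in particular the product is a genuine element of $\C[z_0,z_1,z_2,z_3]$ rather than merely of the larger ring appearing in Definition \ref{respro}.

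Next I would verify the remaining axioms. For commutativity, the flip $v\otimes w\mapsto w\otimes v$ gives an isomorphism $\phi\otimes\psi\cong\psi\otimes\phi$, so by Theorem \ref{poltorep} the two have equal characteristic polynomials, whence $f_{\phi}*f_{\psi}=f_{\phi\otimes\psi}=f_{\psi\otimes\phi}=f_{\psi}*f_{\phi}$; equivalently this is visible directly from the symmetry of $\alpha_i+\beta_j$ in Definition \ref{respro}. For associativity, the canonical isomorphism $(\phi\otimes\psi)\otimes\chi\cong\phi\otimes(\psi\otimes\chi)$ together with Proposition \ref{tenprod} and Theorem \ref{poltorep} yields $(f_{\phi}*f_{\psi})*f_{\chi}=f_{(\phi\otimes\psi)\otimes\chi}=f_{\phi\otimes(\psi\otimes\chi)}=f_{\phi}*(f_{\psi}*f_{\chi})$. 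Finally, the trivial one dimensional representation $\varphi_0$ has $\varphi_0(h)=0$, so $f_{\varphi_0}=\det(z_0 I)=z_0$, and the natural isomorphism $\phi\otimes\varphi_0\cong\phi$ gives $f_{\phi}*z_0=f_{\phi\otimes\varphi_0}=f_{\phi}$, exhibiting $z_0$ as a two sided unit.

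I expect no serious obstacle, because Proposition \ref{tenprod} already performs the essential work of matching $*$ with $\otimes$, after which each monoid axiom is inherited from the corresponding property of the tensor product of modules. The only point genuinely requiring care is the well definedness discussed above, namely that $*$ depends only on the polynomials $f_{\phi}$ and $f_{\psi}$ and not on a chosen representation or eigenvalue ordering; this is exactly what the one to one correspondence of Theorem \ref{poltorep} guarantees, so once it is invoked the argument closes cleanly.
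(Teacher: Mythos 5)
Your proof is correct and takes essentially the same route as the paper: closure, associativity, commutativity, and the unit $z_0$ are all obtained by transporting the corresponding properties of the tensor product of representations through the identity $f_{\phi}*f_{\psi}=f_{\phi\otimes\psi}$ of Proposition \ref{tenprod}. The only difference is your explicit verification that $*$ is well defined on polynomials (independent of the chosen representations and of the eigenvalue ordering), a point the paper leaves implicit but which your appeal to the factorization in Theorem \ref{reptopol} settles cleanly.
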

 \begin{proof}
  By Proposition \ref{tenprod},  the set $\mathbf{CP}_{\mathfrak{sl}(2, \C)}$ is closed under the resolution product.
   For three representations $\phi$, $\psi$, $\varphi$ of $\mathfrak{sl}(2, \C)$, we have
 $\phi\otimes (\psi\otimes \varphi)\simeq (\phi\otimes \psi)\otimes \varphi$. By Proposition \ref{tenprod},
 it follows that
 $$f_{\phi}* (f_\psi*f_\varphi)=f_{\phi\otimes (\psi\otimes \varphi)}=f_{ (\phi\otimes \psi)\otimes \varphi}=(f_{\phi}* f_\psi)*f_\varphi.$$
   Let $\varphi_0$ denote one dimensional trivial representation of  $\mathfrak{sl}(2, \C)$, then $f_{\varphi_0}=z_0$.
   For each representation $\phi$ of  $\mathfrak{sl}(2, \C)$, we have  $\phi\simeq \phi\otimes \varphi_0\simeq \varphi_0\otimes \phi$,  so it follows that
   $f_{\phi}=f_{\phi}*z_0=z_0*f_{\phi}$. Similarly,  the equation $f_{\phi}* f_\psi=f_\psi*f_{\phi}$  holds for $\phi\otimes \psi\simeq \psi\otimes \phi$.
 \end{proof}
 Let $\varphi_m$ be the irreducible representation of $\mathfrak{sl}(2, \C)$ of dimension $m+1$ with highest weight $m$, for $m\geq 0$. For $0\leq n\leq m$,
 it is well known that
 $$\varphi_m\otimes \varphi_n\simeq \oplus_{k=0}^{n}\varphi_{m-n+2k}.$$  By  Proposition \ref{tenprod}, the corollary below holds.
 \begin{cor}\label{tensordecomp}
  For $0\leq n\leq m$, we have
 $$f_{\varphi_m}* f_{\varphi_n}=\prod_{k=0}^{n}f_{\varphi_{m-n+2k}}.$$
 \end{cor}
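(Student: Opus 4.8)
The plan is to deduce the formula directly from the dictionary between tensor products of representations and resolution products of their characteristic polynomials. The essential ingredient, already recorded in Proposition~\ref{tenprod}, is the identity $f_\phi * f_\psi = f_{\phi\otimes\psi}$; combined with the behaviour of the characteristic polynomial under direct sums, this converts the claimed equation into a purely representation-theoretic statement, namely the Clebsch--Gordan rule displayed just above the corollary. In other words, both sides of the asserted equality are characteristic polynomials of honest finite dimensional representations, so it suffices to match the underlying modules.

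Concretely, I would proceed in three steps. First, apply Proposition~\ref{tenprod} to the left-hand side to obtain
$$f_{\varphi_m} * f_{\varphi_n} = f_{\varphi_m \otimes \varphi_n}.$$
Second, substitute the decomposition $\varphi_m \otimes \varphi_n \simeq \oplus_{k=0}^{n} \varphi_{m-n+2k}$, valid under the hypothesis $0 \le n \le m$, so that the right-hand side becomes $f_{\oplus_{k=0}^{n}\varphi_{m-n+2k}}$. Third, use the multiplicativity of $f$ over direct sums, exactly as invoked in the proof of Theorem~\ref{reptopol}: since $\phi(h)$, $\phi(e_1)$, $\phi(e_2)$ act block-diagonally on a direct sum and the determinant is multiplicative on block-diagonal matrices, one has $f_{\oplus_t \phi_t} = \prod_t f_{\phi_t}$ straight from the definition of $f$ as a determinant. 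Applying this to the family $\{\varphi_{m-n+2k}\}_{k=0}^{n}$ yields $\prod_{k=0}^{n} f_{\varphi_{m-n+2k}}$, which is precisely the right-hand side of the corollary.

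Because each step is a direct invocation of a previously established fact, I do not expect any genuine obstacle. The only points requiring care are bookkeeping ones: confirming that the hypothesis $0 \le n \le m$ is what licenses the Clebsch--Gordan formula in the stated form, and tracking the index range $k = 0, \dots, n$ so that the highest weights $m-n+2k$ run correctly from $m-n$ up to $m+n$ in steps of $2$. Neither presents any difficulty, so the corollary follows in a single chain of equalities.
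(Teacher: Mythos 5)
Your proof is correct and takes essentially the same route as the paper: the paper also obtains the corollary by combining the Clebsch--Gordan decomposition $\varphi_m\otimes\varphi_n\simeq\oplus_{k=0}^{n}\varphi_{m-n+2k}$ with Proposition~\ref{tenprod}, relying on the multiplicativity of the characteristic polynomial over direct sums. The only difference is cosmetic --- you make the direct-sum multiplicativity step explicit, whereas the paper leaves it implicit.
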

 \begin{rem} Under the tensor product, the category of  finitely dimensional representations of $\mathfrak{sl}(2, \C)$ is a monoidal category, hence it is natural that the set $\mathbf{CP}_{\mathfrak{sl}(2, \C)}$ should have a monoid structure.
 Considering  that the  $\det(A\otimes I_m-I_n\otimes B)$ is the resolution of characteristic polynomials of $A$ and $B$, where $A$ is a $n\times n$ matrix, and $B$ is a $m\times m$ matrix,  we name the product  resolution product. There are some definitions of
 resolution or determinant of multivariable polynomials in algebraic geometry, so there might be some necessary research for their relations from the view of   algebraic geometry.
 \end{rem}
 \section{The adjoint representation of  $\mathfrak{sl}(2, \C)$ on  $\mathfrak{sl}(n, \C)$}\label{adjrep}
 Let us  recall the canonical basis of $\mathfrak{sl}(n, \C)$ first. Suppose that $e_{ij}$ is  the complex $n\times n$ matrix, with entry $1$ at the $i$th row and $j$th column and  $0$ otherwise.
 The  $\mathfrak{sl}(n, \C)$ is the simple Lie algebra of type $\ddA_{n-1}$ with a basis
 $h_i=e_{ii}-e_{i+1,i+1}$ for $1\leq i\leq n-1$, and $e_{ij}$ for $1\leq i\neq j\leq n$.
 \begin{thm} \label{2ton}
 Let $\phi:\mathfrak{sl}(2, \C)\rightarrow \mathfrak{sl}(n, \C)$ be the Lie algebra homomorphism  defined by $\phi(h)=h_1$, $\phi(e_1)=e_{12}$, $\phi(e_2)=e_{21}$.
 Suppose that $\mathrm{ad}\circ \phi$ be the composition of $\phi$ and the adjoint representation $\mathrm{ad}$ of $\mathfrak{sl}(n, \C)$. Then
\begin{equation}
f_{\mathrm{ad} \circ\phi}(z_0,z_1,z_2,z_3)=z_0^{n^2-5n+6}(z_0^2-(z_1^2+z_2z_3))^{2n-4}(z_0^2-4(z_1^2+z_2z_3))
\end{equation}
 \end{thm}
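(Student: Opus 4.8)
The plan is to reduce everything to Theorem \ref{reptopol}: once the eigenspace dimensions $d_{k}$ of $(\mathrm{ad}\circ\phi)(h)=\mathrm{ad}(h_1)$ on $\mathfrak{sl}(n,\C)$ are known, \eqref{fv} hands back the characteristic polynomial for free. Since $h_1=e_{11}-e_{22}$ is diagonal with diagonal vector $a=(1,-1,0,\dots,0)$, the canonical basis $\{e_{ij}\}_{i\neq j}\cup\{h_1,\dots,h_{n-1}\}$ of $\mathfrak{sl}(n,\C)$ already diagonalizes $\mathrm{ad}(h_1)$, via $[h_1,e_{ij}]=(a_i-a_j)e_{ij}$ and $[h_1,h_i]=0$. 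So the first step is simply to tabulate the weights $a_i-a_j$ and gather multiplicities.

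Running through the cases, $e_{12}$ and $e_{21}$ carry the weights $\pm2$ with multiplicity one each; the entries with exactly one index in $\{1,2\}$, namely $e_{1j},e_{i2}$ (weight $+1$) and $e_{2j},e_{i1}$ (weight $-1$) for $i,j\geq3$, give weight $\pm1$ with multiplicity $2(n-2)$ per sign; and everything else — the block $\{e_{ij}:i,j\geq3,\ i\neq j\}$ together with the full Cartan $\{h_1,\dots,h_{n-1}\}$ — is killed by $\mathrm{ad}(h_1)$ and hence sits in the zero weight space. This gives $d_2=1$ and $d_1=2n-4$, which feed into \eqref{fv} as the two nontrivial factors $(z_0^2-4(z_1^2+z_2z_3))$ and $(z_0^2-(z_1^2+z_2z_3))^{2n-4}$, exactly as displayed.

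As a cross-check I would recompute the multiplicities abstractly: under the corner embedding $\phi$ the standard module splits as $\C^n\cong\varphi_1\oplus(n-2)\varphi_0$, so $\mathfrak{gl}(n,\C)\cong\C^n\otimes(\C^n)^{\ast}$ decomposes (using self-duality of $\varphi_1$ and $\varphi_1\otimes\varphi_1\cong\varphi_2\oplus\varphi_0$, cf. Corollary \ref{tensordecomp}), and deleting the trivial center yields $\mathfrak{sl}(n,\C)\cong\varphi_2\oplus(2n-4)\varphi_1\oplus(n-2)^2\varphi_0$; the multiplicativity of $f$ over direct sums from the proof of Theorem \ref{reptopol} then rebuilds the product. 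The one delicate point — the step I expect to be the real obstacle — is the exponent of $z_0$, i.e. the zero-weight multiplicity $d_0$, because both the lower-right block and the entire Cartan feed it. The safeguard is the degree identity $\sum_k d_k=\dim\mathfrak{sl}(n,\C)=n^2-1$, which forces $d_0=(n^2-1)-(4n-6)=n^2-4n+5=(n-2)^2+1$. This differs from the printed exponent $n^2-5n+6=(n-2)(n-3)$ exactly by the Cartan dimension $n-1$, so before finalizing I would double-check whether the power of $z_0$ ought to read $n^2-4n+5$.
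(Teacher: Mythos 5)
Your approach is exactly the paper's: diagonalize $\mathrm{ad}(h_1)$ on the canonical basis of $\mathfrak{sl}(n,\C)$, tabulate the weight multiplicities, and feed them into Theorem \ref{reptopol}. Your counts for the weights $\pm 2$ and $\pm 1$ (multiplicities $1$ and $2n-4$) agree with the paper's proof. The one place you diverge --- the zero-weight multiplicity --- is the place where you are right and the printed theorem is wrong. The paper's proof does record $[h_1,h_i]=0$ for $1\leq i\leq n-1$, but then reports the multiplicity of the eigenvalue $0$ as $n^2-5n+6=(n-2)(n-3)$, which counts only the off-diagonal $e_{ij}$ with $i,j\geq 3$ and forgets the $n-1$ Cartan generators. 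The correct value is $d_{0}=(n-2)(n-3)+(n-1)=n^2-4n+5$, exactly as you computed. Both of your safeguards confirm this: the total degree of $f_{\mathrm{ad}\circ\phi}$ must equal $\dim\mathfrak{sl}(n,\C)=n^2-1$, whereas the printed formula has degree $(n^2-5n+6)+2(2n-4)+2=n^2-n$; and the module decomposition $\mathfrak{sl}(n,\C)\cong\varphi_2\oplus(2n-4)\varphi_1\oplus(n-2)^2\varphi_0$ gives $d_0=1+(n-2)^2=n^2-4n+5$. So your proof is correct, follows the paper's own method, and the exponent of $z_0$ in the statement of Theorem \ref{2ton} should indeed read $n^2-4n+5$ rather than $n^2-5n+6$.
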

 \begin{proof} By Theorem \ref{reptopol},  we need to compute the eigenvalues of $\mathrm{ad}h_1$ and their multiplicities. \\
 We recall the root system  $\Phi$ of type $\ddA_{n-1}$. The set $\Phi$ can be realized in $\R^n$, with $\Phi=\{\epsilon_j-\epsilon_i\mid 1\leq i\neq j\leq n\}$ with $\{\epsilon_i\}_{i=1}^{n}$ being the canonical orthonormal basis of
 $\R^n$. For $\mathrm{ad}h_1$, we have
 $$[h_1,h_i]=0, \quad for \quad 1\leq i\leq n-1$$
 $$[h_1, e_{ij}]=
 (\epsilon_j-\epsilon_i,\epsilon_2-\epsilon_1)e_{ij}\quad for \quad 1\leq i\neq j\leq n,$$
 where $(\epsilon_j-\epsilon_i,\epsilon_2-\epsilon_1)$ is the canonical inner product of $\epsilon_j-\epsilon_i$ and $\epsilon_2-\epsilon_1$.
 Hence we get the eigenvalues of $\mathrm{ad}h_1$ are $2$, $-2$, $1$, $-1$ and $0$ with their multiplicities
 $1$, $1$, $2n-4$,  $2n-4$ and $n^2-5n+6$, respectively. Therefore, the theorem holds for  Theorem \ref{reptopol}.
 \end{proof}
 In \cite{KY2021}, the characteristic polynomial of the Lie algebra  $\mathfrak{sl}(n, \C)$ is defined as $Q_{\mathfrak{sl}(n, \C)}(z)$,  the determinant
 of the linear pencil $z_0 I+z_1\ad h_1+\cdots+z_{n-1}\ad h_{n-1} +z_{12} \ad e_{12}+ \cdots+z_{21}\ad e_{21}+\cdots+z_{n-1,n}\ad  e_{n,n-1}$. By the Theorem \ref{2ton},  the following holds.
 \begin{cor} \label{equalsl2}
 For the adjoint representation of  $\mathfrak{sl}(n, \C)$,Let
 $$f_i(z_0,z_i, z_{i,i+1},z_{i+1,i})=\det(z_0 I+z_i \ad h_i+ z_{i,i+1} \ad e_{i,i+1}+ z_{i+1,i}  \ad e_{i+1,i}).$$
 Then it follows that
$$f_i(z_0,z_i, z_{i,i+1},z_{i+1,i})==f_{\mathrm{ad} \circ\phi}(z_0,z_i, z_{i,i+1}, z_{i+1,i}).$$
 \end{cor}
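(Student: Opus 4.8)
The plan is to reduce the statement to the content of Theorem~\ref{2ton}, which treats the node~$1$ triple, by exhibiting an automorphism of $\mathfrak{sl}(n,\C)$ that carries the node~$1$ triple $(h_1,e_{12},e_{21})$ onto the node~$i$ triple $(h_i,e_{i,i+1},e_{i+1,i})$. First I would introduce, for each $1\le i\le n-1$, the Lie algebra homomorphism $\phi_i:\mathfrak{sl}(2,\C)\to\mathfrak{sl}(n,\C)$ determined by $\phi_i(h)=h_i$, $\phi_i(e_1)=e_{i,i+1}$, $\phi_i(e_2)=e_{i+1,i}$; the relations $[e_{i,i+1},e_{i+1,i}]=h_i$, $[h_i,e_{i,i+1}]=2e_{i,i+1}$, $[h_i,e_{i+1,i}]=-2e_{i+1,i}$ are immediate from the matrix units, so $\phi_i$ is well defined and $\phi_1=\phi$. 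With this notation the definition of $f_i$ reads precisely $f_i(z_0,z_i,z_{i,i+1},z_{i+1,i})=f_{\ad\circ\phi_i}(z_0,z_i,z_{i,i+1},z_{i+1,i})$, so the corollary is equivalent to the polynomial identity $f_{\ad\circ\phi_i}=f_{\ad\circ\phi}$.

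The key step is to show that $\ad\circ\phi_i$ and $\ad\circ\phi$ are isomorphic as representations of $\mathfrak{sl}(2,\C)$. Let $\sigma$ be any permutation of $\{1,\dots,n\}$ with $\sigma(1)=i$ and $\sigma(2)=i+1$, and let $P_\sigma\in\GL(n,\C)$ be the associated permutation matrix, so that conjugation acts on matrix units by $P_\sigma e_{kl}P_\sigma^{-1}=e_{\sigma(k)\sigma(l)}$. Then $\mathrm{Ad}(P_\sigma)$ is an automorphism of $\mathfrak{sl}(n,\C)$ sending $h_1\mapsto h_i$, $e_{12}\mapsto e_{i,i+1}$ and $e_{21}\mapsto e_{i+1,i}$; equivalently $\phi_i=\mathrm{Ad}(P_\sigma)\circ\phi$. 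Using the standard identity $\ad(\mathrm{Ad}(g)\,y)=\mathrm{Ad}(g)\,\ad(y)\,\mathrm{Ad}(g)^{-1}$, one checks that $\mathrm{Ad}(P_\sigma)$ intertwines the two representations, i.e. $(\ad\,\phi_i(x))\circ\mathrm{Ad}(P_\sigma)=\mathrm{Ad}(P_\sigma)\circ(\ad\,\phi(x))$ for every $x\in\mathfrak{sl}(2,\C)$. Since $\mathrm{Ad}(P_\sigma)$ is invertible, this gives $\ad\circ\phi_i\simeq\ad\circ\phi$.

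To conclude, I would invoke Theorem~\ref{poltorep}: isomorphic finite dimensional representations share the same characteristic polynomial, so $f_{\ad\circ\phi_i}=f_{\ad\circ\phi}$ and the corollary follows after renaming the variables. Alternatively, and in keeping with the method used to prove Theorem~\ref{2ton}, one could argue purely through Theorem~\ref{reptopol}: that theorem shows $f_{\ad\circ\phi_i}$ depends only on the multiplicities of the eigenvalues of $\ad\,h_i$, and these are computed from the inner products $(\epsilon_l-\epsilon_k,\epsilon_{i+1}-\epsilon_i)$ exactly as in the node~$1$ case, yielding the same eigenvalue list $2,-2,1,-1,0$ with multiplicities $1,1,2n-4,2n-4,n^2-5n+6$, independent of $i$.

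I do not expect a serious obstacle; the only points requiring care are the bookkeeping in the intertwining relation and the verification that such a $\sigma$ exists for each $i$ (it does, since $1\le i<i+1\le n$ and any partial injection extends to a permutation). The conceptual content is simply that all the triples $(h_i,e_{i,i+1},e_{i+1,i})$ are conjugate inside $\mathfrak{sl}(n,\C)$, reflecting the transitivity of the Weyl group of type $\ddA_{n-1}$ on the $\ddA_1$ subsystems generated by the simple roots.
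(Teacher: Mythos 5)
Your proposal is correct, and it covers both of the routes that the paper's own proof sketches, in reverse order of emphasis. The paper's primary suggestion is to ``apply the analogous argument'': construct $\phi_i$ and redo the root-system computation of Theorem~\ref{2ton} for $\ad h_i$; this is exactly your alternative argument via Theorem~\ref{reptopol}. The paper's second route invokes \cite[Theorem 2.3]{KY2021} (invariance of the characteristic polynomial under Lie algebra automorphisms) together with transitivity of the type $\ddA_{n-1}$ Weyl group on simple roots, but leaves the relevant automorphism abstract. Your main argument is a self-contained implementation of that same idea: you realize the Weyl group element concretely as $\mathrm{Ad}(P_\sigma)$ for a permutation matrix with $\sigma(1)=i$, $\sigma(2)=i+1$, check that it carries $(h_1,e_{12},e_{21})$ to $(h_i,e_{i,i+1},e_{i+1,i})$, verify the intertwining identity $\ad(\mathrm{Ad}(g)\,y)=\mathrm{Ad}(g)\,\ad(y)\,\mathrm{Ad}(g)^{-1}$, and conclude with the easy direction of Theorem~\ref{poltorep} (isomorphic representations share a characteristic polynomial, since conjugating the whole linear pencil by the invertible intertwiner does not change its determinant). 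What your version buys is independence from the external citation and a precise justification of the paper's unproved assertion that the Weyl group element ``can be interpreted as'' a Lie algebra automorphism matching up the two $\mathfrak{sl}(2,\C)$-triples. One minor point worth recording explicitly: $P_\sigma$ may have determinant $-1$, so it need not lie in $\SL(n,\C)$, but conjugation by any element of $\GL(n,\C)$ still preserves $\mathfrak{sl}(n,\C)$ and its bracket, so $\mathrm{Ad}(P_\sigma)$ is a legitimate automorphism and the argument goes through.
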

 \begin{proof} We can apply the analogous argument by constructing a Lie algebra isomorphism  $\phi_i$ from $\mathfrak{sl}(2, \C)$  to the
 subalgebra  of $\mathfrak{sl}(n, \C)$ generated by $h_i$, $e_{i,i+1}$ and $e_{i+1,i}$.  By \cite[Theorem 2.3]{KY2021}, there is another way to prove this.  From \cite[Theorem 2.3]{KY2021}, it is known that the characteristic polynomial $Q_{\mathfrak{sl}(n, \C)}(z)$ is invariant under the Lie algebra  automorphism.
 In the Lie algebra, the image of $\phi_i$ is corresponding to the simple root $\alpha_i=\epsilon_{i+1}-\epsilon_{i+1}$ as in the proof of Theorem \ref{2ton}, namely,
 $\phi(\mathfrak{sl}(2, \C))=\C h_{\alpha_i}+\C e_{\alpha_i}+\C e_{-\alpha_i}$. Under the action  Weyl group $W$ of  type $\ddA_{n-1}$ associated to  $\mathfrak{sl}(n, \C)$, all these roots are on the same orbit of $W$. So it implies that there exists a $\sigma\in W$, such that
 $\sigma(\alpha_i)=\alpha_j$ for $1\leq i,j \leq n-1$.   It can be interpreted as a  Lie algebra automorphism of $\mathfrak{sl}(n, \C)$ such that $$\sigma(h_{\alpha})=h_{\sigma(\alpha)},\quad\sigma(e_\alpha)=e_{\sigma(\alpha)}.$$
 Therefore, the conclusion follows just by considering the case in Theorem \ref{2ton}.
\begin{rem} From  \cite[Theorem 2.3]{KY2021} and  the proof of the Corollary \ref{equalsl2}, for a complex simple Lie algebra $L$, its characteristic polynomial is invariant under  its Weyl group, hence the invariant theory of  Weyl groups  might be helpful to compute the characteristic polynomial.
\end{rem}
 \end{proof}

Tianyi jiang\\
Email: jtyoo2021@163.com\\
School of Mathematics, Shandong University\\
Shanda Nanlu 27, Jinan, \\
Shandong Province, China\\
Postcode: 250100\\
Shoumin Liu\\
Email: s.liu@sdu.edu.cn\\
School of Mathematics, Shandong University\\
Shanda Nanlu 27, Jinan, \\
Shandong Province, China\\
Postcode: 250100

\end{document}